\documentclass[12pt]{amsart}
\usepackage{tabularx} %
\usepackage{fullpage,graphicx,color}

\theoremstyle{plain}
\newtheorem{theorem}{Theorem}

\newtheorem{corollary}[theorem]{Corollary}

\theoremstyle{definition}

\newtheorem{example}[theorem]{Example}

\newtheorem{problem}[theorem]{Problem}

\begin{document}

\title{Optimized random chemistry}

\date{\today}

\author{Jeffrey S. Buzas}
\address{Dept. of Mathematics \& Statistics\\
  University of Vermont \\
  Burlington, VT 05401}
\email{jbuzas@uvm.edu}

\author{Gregory S. Warrington}
\address{Dept. of Mathematics and Statistics\\
  University of Vermont \\
  Burlington, VT 05401}
\email{gregory.warrington@uvm.edu}

\thanks{Second author partially supported by National Science
  Foundation grant DMS-1201312}

\begin{abstract}
  The random chemistry algorithm of Kauffman can be used to determine
  an unknown subset $S$ of a fixed set $V$.  The algorithm proceeds by
  zeroing in on $S$ through a succession of nested subsets $V_0 = V
  \supset V_1 \supset \cdots \supset S$.  In Kauffman's original
  algorithm, the size of each $V_i$ is chosen to be half the size of
  $V_{i-1}$.  In this paper we determine the optimal sequence of sizes
  so as to minimize the expected run time of the algorithm.
\end{abstract}

\keywords{random chemistry algorithm, subset-guessing game}

\maketitle

\section{Introduction}

Consider the following set-guessing game between a responder and a
questioner.  The two players first agree on integers $n \geq k \geq
0$.  The game begins with the responder secretly choosing a subset $S
\subset [n] = \{1,2,\ldots,n\}$ of cardinality $k$.  The questioner's
task is to determine $S$.  During each turn, the questioner proposes a
set $V$; the responder then indicates whether or not $V$ contains $S$.
The game ends when the questioner proposes $V=S$.

In~\cite{kauffman1996home}, Kauffman proposes a \emph{Random
  Chemistry} (RC) algorithm for determining the set $S$ when $k \ll
n$.  In following this algorithm, the questioner zeroes in on $S$ by
choosing successively smaller sets.  More precisely, she creates a
sequence of sets $[n] = V_0 \supset V_1 \supset V_2 \supset \cdots
\supset V_M = S$ as follows.  Given a set $V_{i-1}$ known to strictly
contain $S$, the questioner proposes random subsets of $V_{i-1}$ of
cardinality $|V_{i-1}|/2$ until she finds one containing $S$.  This
set is then taken to be $V_i$.

By allowing the ratios $|V_i|/|V_{i-1}|$ to deviate from a ratio of
$1/2$, we obtain a more general class of algorithms.  In
Theorem~\ref{thm:main} we determine the sequence of ratios that
minimizes the expected number of turns in the game.

The above set-guessing problem has appeared in disparate applied
contexts.  In fact, Kauffman developed his RC algorithm as a way to
hypothetically search for auto-catalytic sets of molecules.  Eppstein
et al.~\cite{epp} later implemented an RC algorithm as a way of
searching for small sets of non-linearly interacting genetic
variations in genome-wide association studies.  More
recently, Eppstein and Hines~\cite{EppsteinHines} applied a slight
variation of this algorithm to search for collections of multiple
outages leading to cascading power failures in models of electrical
distribution grids.

A number of closely related problems have also been widely studied.
Most notably, the field of group testing (see, for
example,~\cite{grouptesting} for an overview) is also concerned with
finding unknown sets.
In group testing, a positive test result occurs when the pooled group
$V$ has a nonempty intersection with the unknown set $S$.  In the
problem we discuss, a positive test result occurs only when $V$
contains all of $S$.

Another class of related problems is that of searching games in which
the responder can lie.  The R\'enyi-Ulam game~\cite{renyi,ulam} is
probably the most famous of these; see~\cite{searchingerrors} for
a survey of such games.

In Section~\ref{sec:prob} we state precisely the optimization problem
we are trying to solve.  In Section~\ref{sec:sol} we solve the
continuous analog of the problem while Section~\ref{sec:numeric}
presents numeric data regarding how well the continuous solution
mirrors the discrete one.  Section~\ref{sec:calcofvar} presents an
approximate solution to the problem of Section~\ref{sec:prob} as an
application of the calculus of variations.

\section{The Optimization Problem}
\label{sec:prob}

Assume $k$, $n$ and $S$ are as given as in the Introduction; set
$n_0=n$ and $V_0 = S$.  If we choose a proper subset of $V_0$ of size
$n_1$ such that $n_0 > n_1\geq k$, then
\begin{equation}\label{eq:ratio}
  p_1=\frac{\binom{n_0-k}{n_1-k}}{\binom{n_0}{n_1}}
\end{equation}
is the probability of obtaining a subset containing $S$.  The expected
number of times we would have to select a subset of size $n_1$ until
we find one containing $S$ is therefore $1/p_1$.

Now consider a sequence $n_1,n_2,\dots,n_M$ where $n_0>n_1>n_2>\cdots
> n_M=k$.  Our \emph{Generalized Random Chemistry (GRC) algorithm}
begins by selecting sets of size $n_1$ until we find one, $V_1$, such
that $V_1\supset S$.  Such a $V$ must exist since, by hypothesis, $V_0
\supset S$ and $n_1\geq k$.  We then select subsets of size $n_2$ from
$V_1$ until we find a set $V_2$ containing $S$.  The process continues
until we have chosen $V_M$.  Define $p_i$ as the probability of
selecting a set $V_i$ of size $n_i$ containing $S$ from a set
$V_{i-1}$ of size $n_{i-1}$ known to contain $S$.  Then, as
in~(\ref{eq:ratio}),
\begin{equation}
  p_i=\frac{ \binom{n_{i-1}-k}{n_i-k}}{\binom{n_{i-1}}{n_i}}.
\end{equation}

Let the random variable $X_i$ represent the number of selections
needed to find a set $V_i$ containing $S$ as described above. Then the
expected value of $X_i$ is $1/p_i$ ($X_i$ is a geometric random
variable).  Let $X=X_1+\cdots +X_M$ denote the random variable
representing the total number of selections until we find $S$.  This
presents the following\\

\begin{problem}\label{prob}
  How should one choose $M\leq n_0-k$ and $n_1,n_2,\dots,n_M$ subject
  to $n_0>n_1>n_2>\dots > n_M=k$ so as to minimize $E[X]$?
\end{problem}

\section{The Continuous Solution}
\label{sec:sol}

In the combinatorial formulation leading to Problem~\ref{prob}, the
$n_i$ are all integers.  In this section we relax this requirement and
provide an optimal solution when the $n_i$ are only required to be
real.  Accordingly, we replace the factorial functions with
$\Gamma$-functions; recall that $\Gamma(n)=(n-1)!$ for $n$ a positive
integer.

\begin{theorem}\label{thm:main}
  The expected number of steps in the GRC algorithm
  \begin{equation}\label{eq:min}
    E[X]=\sum_{i=1}^M\frac{1}{p_i} =
    \sum_{i=1}^M\frac{\Gamma(n_{i-1}+1)\Gamma(n_i-k+1)}
    {\Gamma(n_i+1)\Gamma(n_{i-1}-k+1)}
  \end{equation}
  is minimized over the real numbers for $p_i = \binom{n_0}{k}^{-1/M}$.  The
  optimal value of $M$ is $\ln\binom{n_0}{k}$, in which case the
  expression for the $p_i$ reduces to $p_i = e^{-1}$ and $E[X] =
  e\ln\binom{n_0}{k}$.
\end{theorem}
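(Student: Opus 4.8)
The plan is to uncover a hidden invariant: with the endpoints $n_0$ and $n_M=k$ fixed, the product $\prod_{i=1}^M p_i$ does not depend on the intermediate sizes at all. First I would simplify each factor. Expanding the binomials in $p_i=\binom{n_{i-1}-k}{n_i-k}/\binom{n_{i-1}}{n_i}$ and cancelling the common $(n_{i-1}-n_i)!$ gives $p_i=\frac{(n_{i-1}-k)!\,n_i!}{(n_i-k)!\,n_{i-1}!}$, which I would then recognize as $p_i=\binom{n_i}{k}\big/\binom{n_{i-1}}{k}$ (interpreting $\binom{x}{k}=\Gamma(x+1)/[\Gamma(k+1)\Gamma(x-k+1)]$ in the continuous setting). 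Writing $b_i=\binom{n_i}{k}$, so that $p_i=b_i/b_{i-1}$ and $1/p_i=b_{i-1}/b_i$, the product telescopes: $\prod_{i=1}^M p_i = b_M/b_0 = \binom{k}{k}/\binom{n_0}{k} = \binom{n_0}{k}^{-1}$, a fixed quantity.

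With this in hand the optimization over the $n_i$ splits into two pieces. Fixing $M$, I would apply the AM--GM inequality to the terms of $E[X]=\sum_{i=1}^M b_{i-1}/b_i$: since their product is the fixed number $\binom{n_0}{k}$, we get $E[X]\ge M\binom{n_0}{k}^{1/M}$, with equality exactly when all the ratios $b_{i-1}/b_i$ are equal, i.e.\ when $p_i=\binom{n_0}{k}^{-1/M}$ for every $i$. The delicate point is to confirm that this equality configuration is \emph{attainable} by a genuine decreasing real sequence, rather than merely being a lower bound on free variables. For this I would prescribe the geometric interpolation $b_i=\binom{n_0}{k}^{(M-i)/M}$ between $b_0=\binom{n_0}{k}$ and $b_M=1$, and then solve $\binom{n_i}{k}=b_i$ for $n_i$. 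Because $x\mapsto\binom{x}{k}=\frac{x(x-1)\cdots(x-k+1)}{k!}$ is continuous and strictly increasing from $1$ at $x=k$ to $\infty$, each $b_i\in[1,\binom{n_0}{k}]$ has a unique real preimage $n_i\ge k$, and the strict monotonicity turns $b_0>b_1>\cdots>b_M$ into the required chain $n_0>n_1>\cdots>n_M=k$. This establishes that the minimum for fixed $M$ equals $M\binom{n_0}{k}^{1/M}$, achieved at equal $p_i$.

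It remains to minimize $\phi(M)=M\binom{n_0}{k}^{1/M}$ over real $M>0$. Setting $A=\binom{n_0}{k}$ and differentiating $\ln\phi(M)=\ln M + (\ln A)/M$ gives $\phi'/\phi = (M-\ln A)/M^2$, which vanishes only at $M=\ln A=\ln\binom{n_0}{k}$ and changes sign from negative to positive there, so this is the global minimizer. Substituting back, $p_i=A^{-1/M}=A^{-1/\ln A}=e^{-1}$ and $\phi(\ln A)=(\ln A)\,A^{1/\ln A}=e\ln\binom{n_0}{k}$, as claimed. I expect the AM--GM step and the optimization over $M$ to be routine; the main obstacle is the attainability argument of the second paragraph, namely verifying that the equal-$p_i$ profile corresponds to an admissible strictly decreasing sequence $n_i$ of reals, which is what the substitution $b_i=\binom{n_i}{k}$ together with monotonicity of $\binom{x}{k}$ is designed to resolve. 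A minor additional point to flag is that the stated optimal $M=\ln\binom{n_0}{k}$ is generally non-integral, so the theorem's optimum is understood over real $M$ in the relaxed continuous problem.
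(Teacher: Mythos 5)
Your proof is correct, and it shares the paper's overall skeleton --- reduce everything to the invariant $\prod_{i=1}^M p_i = \binom{n_0}{k}^{-1}$, show that for fixed $M$ the minimum occurs at equal $p_i$, then minimize $M\binom{n_0}{k}^{1/M}$ over $M$ --- but your middle step runs along a genuinely different and in fact tighter route. Where you use AM--GM, the paper applies Lagrange multipliers to $F(z_1,\ldots,z_M,\lambda)=\sum_{i=1}^M z_i-\lambda\bigl[\sum_{i=1}^M \ln(z_i)-C\bigr]$; that method only locates the stationary point $z_i=\binom{n_0}{k}^{1/M}$ and, as written, leaves implicit both the verification that this critical point is a global minimum and the irrelevance of the boundary $z_i=1$. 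Your AM--GM bound $\sum_i z_i \ge M\bigl(\prod_i z_i\bigr)^{1/M}$, with equality exactly when all $z_i$ coincide, delivers global optimality for free. You also supply two points the paper takes for granted: an explicit proof of the product invariant via the telescoping identity $p_i=\binom{n_i}{k}\big/\binom{n_{i-1}}{k}$ (the paper merely ``notes'' that $\prod_i z_i=\binom{n_0}{k}$), and the attainability check that the equal-$p_i$ profile is realized by an admissible real chain $n_0>n_1>\cdots>n_M=k$, obtained by inverting the strictly increasing map $x\mapsto\binom{x}{k}$ on $[k,\infty)$. The optimization over $M$ is identical in the two treatments. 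In short: same decomposition, but your inner argument is more elementary and closes gaps that the published proof leaves open.
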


\begin{proof}
  Let $z_i=1/p_i$ and note that $\prod_{i=1}^M z_i=\binom{n_0}{k}$.
  Then the problem reduces to finding $M$ and $z_1,\dots, z_M$ that
  minimize $\sum_{i=1}^Mz_i$ subject to $\sum_{i=1}^M \ln(z_i)=C$
  where $C=\ln \binom{n_0}{k} $ and $z_i \ge 1$ for $1\leq i\leq M$.

  The method of Lagrange multipliers instructs us to minimize
  \begin{equation}
    F(z_1,z_2,\dots,z_M,\lambda) =
    \sum_{i=1}^Mz_i-\lambda\left [\sum_{i=1}^M \ln(z_i)-C\right ]
  \end{equation}
  where $\lambda$ is the Lagrange multiplier.  Differentiating with
  respect to $z_1,\dots,z_M$ and $\lambda$, setting the derivatives
  equal to zero and solving gives the solution $z_i=\hat{z}_i =
  \binom{n_0}{k}^{1/M}$.  To find the optimal value for $M$, note that
  $\sum_{i=1}^M \hat{z}_i=Me^{C/M}$.  This function is minimized when
  $M = \hat{M} = C = \ln \binom{n_0}{k}$.  The optimal values for
  the probabilities are then $\hat{p}_i=e^{-C/\hat{M}}=e^{-1}$ and the
  expected number of steps using the $\hat{p}_i$ is then
  $E[X]=\sum_{i=1}^{\hat{M}} 1/ \hat{p}_i=e\ln\binom{n_0}{k}$.
\end{proof}

While Theorem~\ref{thm:main} gives closed forms for the optimal values
of $M$ and the $p_i$, we do not obtain a simple expression for the
$n_i$.  For fixed $M$, the optimal sequence $n_1,n_2,\dots,n_{M-1}$
can be obtained by successively solving the equations
\begin{equation}\label{eq:nieqn}
  \frac{\binom{n_0}{n_1}}{\binom{n_0-k}{n_1-k}} =
  \binom{n_0}{k}^{1/M}.
\end{equation}
The equations are easily solved using a univariate root finding
algorithm.

Alternatively, we can modify equation~(\ref{eq:nieqn}) by using the
approximation $\binom{a}{b}\approx (a/b)^b$.  (This approximation
works best for $b\ll a$.)  In doing so, we find that the optimal
values of the $n_i$ are
\begin{equation}\label{eq:niapprox}
  n_i \approx k^{i/M}n_0^{1-i/M}
\end{equation}
and that the optimal value of $M$ is approximately $k\ln
\left(\frac{n_0}{k}\right)$.  As shown in Section~\ref{sec:calcofvar},
this approximate solution can be obtained directly by applying the
binomial coefficient approximation to equation~(\ref{eq:min}) and then
applying the calculus of variations.

From Theorem~\ref{thm:main}, the optimal solution has the property
that the $p_i$ are constant. The following corollary follows from the
well-known fact that a sum of independent, identically distributed
geometric random variables has the negative binomial distribution.

\begin{corollary}\label{cor:main}
  Fix $n_0,k$ and $M$. Then the sequence of $n_1,n_2,\dots,n_M$
  minimizing $E[X]$ induces the distributional property that $X\sim $
  Negative Binomial$(M,p)$ where $p= \binom{n_0}{k}^{-1/M}$.  That is,
  $X$ has probability mass function
  $P(X=x)=\binom{x-1}{M-1}(1-p)^{x-M}p^M$ for $x=M,M+1,\dots$.
\end{corollary}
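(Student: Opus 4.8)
The plan is to combine the optimality structure established in Theorem~\ref{thm:main} with a direct identification of the distribution of the sum $X = X_1 + \cdots + X_M$. First I would invoke the Lagrange-multiplier computation from the proof of Theorem~\ref{thm:main}: for fixed $M$, the minimizer of $E[X] = \sum_i z_i$ subject to $\sum_i \ln(z_i) = C$ occurs at $z_i = \binom{n_0}{k}^{1/M}$ for every $i$. Equivalently, the $E[X]$-minimizing sequence $n_1,\dots,n_M$ is precisely the one that makes all the stage probabilities equal, $p_i = \binom{n_0}{k}^{-1/M} =: p$. Thus at this optimum each $X_i$ is a geometric random variable with the \emph{same} parameter $p$.

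Next I would observe that the $X_i$ are independent: the random selections performed at stage $i$ (drawing subsets of $V_{i-1}$) are carried out independently of those at every other stage, so the waiting times $X_1,\dots,X_M$ are mutually independent. Hence, under the optimal sequence, $X = \sum_{i=1}^M X_i$ is a sum of $M$ independent, identically distributed Geometric$(p)$ random variables.

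Finally I would identify the distribution of this sum. The cleanest route is the probabilistic interpretation: regard each individual selection as a Bernoulli$(p)$ trial, where a ``success'' means the chosen set contains $S$. Then $X$ is exactly the number of trials needed to accumulate the $M$-th success. The event $\{X = x\}$ requires the $x$-th trial to be a success and exactly $M-1$ successes among the first $x-1$ trials, giving
\begin{equation}
  P(X = x) = \binom{x-1}{M-1} p^{M-1}(1-p)^{x-M}\cdot p = \binom{x-1}{M-1}(1-p)^{x-M}p^M
\end{equation}
for $x = M, M+1, \dots$, which is the Negative Binomial$(M,p)$ mass function claimed. Alternatively one could convolve the geometric mass functions $M$ times, or raise the common geometric probability generating function to the $M$-th power and extract coefficients.

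There is little genuine obstacle here, since the corollary is essentially a repackaging of Theorem~\ref{thm:main} through a standard distributional fact. The only point requiring care is the independence of the $X_i$, which must be read off from the construction of the GRC algorithm rather than from the optimization itself; once independence and the common parameter $p$ are in hand, the Negative Binomial identification is immediate.
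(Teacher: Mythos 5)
Your proposal is correct and follows essentially the same route as the paper: Theorem~\ref{thm:main} gives that the optimal sequence makes all stage probabilities equal to $p = \binom{n_0}{k}^{-1/M}$, and then $X$ is a sum of $M$ independent, identically distributed geometric random variables, hence Negative Binomial$(M,p)$. The only difference is that the paper simply cites this last fact as well known, whereas you additionally supply its standard proof (counting trials until the $M$-th success) and make the independence of the $X_i$ explicit, both of which are fine.
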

  
The utility of Corollary~\ref{cor:main} is that it provides a
straightforward means for computing the probability that more than $l$
steps would be required to find the solution using the optimal sequence.

\section{Numerical data}
\label{sec:numeric}

Equation~(\ref{eq:niapprox}) gives a closed form approximate solution
to the continuous problem motivated by Problem~\ref{prob}.  To utilize
this solution in actual problems, we need to map the $n_i$ to
integers.  A simple method is to provisionally set each according
to~(\ref{eq:niapprox}).  Then, for $i=M-1,M-2,\ldots,2,1$ set $n_i =
\max(n_{i+1}+1,\lfloor n_i\rfloor)$.  In this section we will explore
several examples in order to show that the integer-valued approximate
solutions compare favorably with the continuous minimum
of~(\ref{eq:min}).

\begin{example} 
  Let $n_0=100$ and $k=5$.  The optimal number of partitions given by
  Theorem~\ref{thm:main} is $ M_{\mbox{opt}} = \ln\binom{n_0}{k}
  \approx 18$.  Using this optimal value for $M$, we computed values
  for $n_1,\dots,n_{M-1}$ using equations~(\ref{eq:nieqn})
  and~(\ref{eq:niapprox}).  Figure~\ref{fig:staircase} illustrates the
  very similar sequences that result.  The expected number of steps
  are $49.3$ for the exact solution and $50.5$ for the approximate
  solution.
\end{example}

\begin{center}
  \begin{figure}
\includegraphics[scale=0.40]{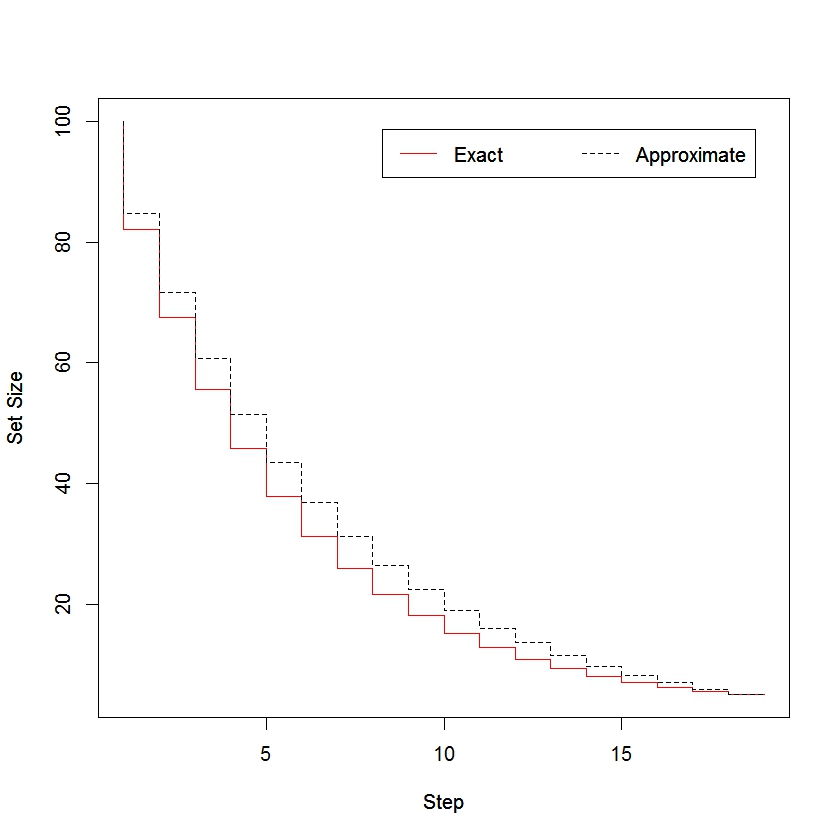} 
\caption{}
\label{fig:staircase}
\end{figure}
\end{center}

We also simulated $100,000$ runs of the GRC algorithm for the same
values of $n_0=100$ and $k=5$.  In each experiment, we generated a set
$S$ and counted the number of steps to find it using the optimal
sequence on the integer scale.  The mean number of steps was $50.9$.
Figure~\ref{fig:comparison} shows the empirical distribution for the
number of steps with the negative binomial distribution overlaid.

\begin{center}
\begin{figure}
\includegraphics[scale=0.40]{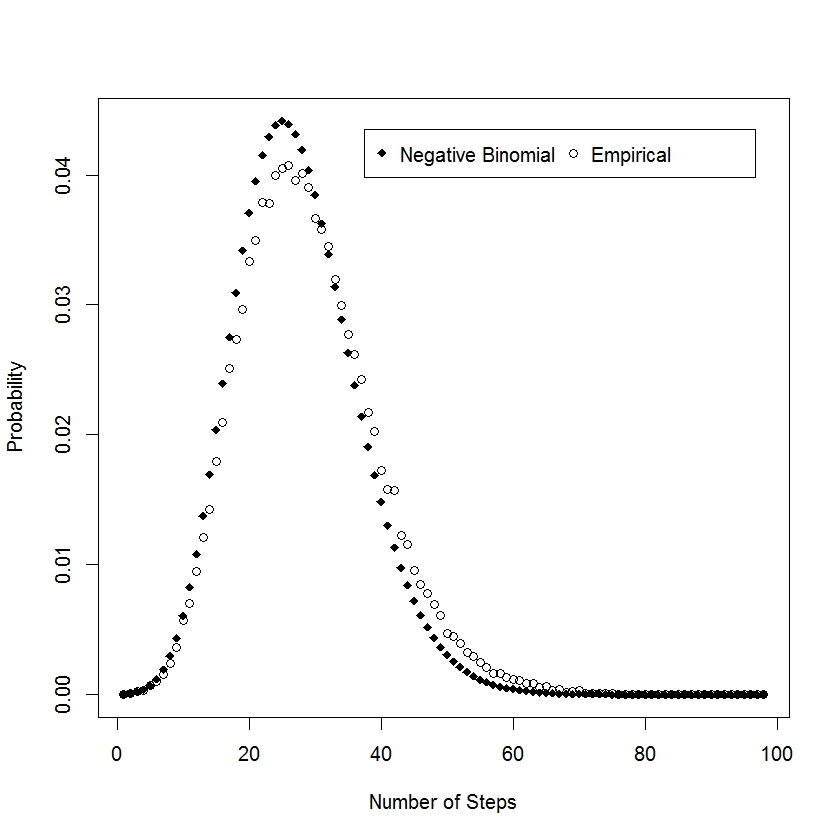} 
\caption{Comparison of Negative Binomial distribution to empirical
  distribution for number of steps based on 100,000 simulations where
  $n_0=100$ and $k=5$ }
\label{fig:comparison}
\end{figure}
\end{center}

Finally, in Figure~\ref{fig:curves} we compare the optimal solution of
Theorem~\ref{thm:main} to that of Kauffman's original RC algorithm.
Three different $n_i$-sequences are illustrated for $n_0=100$, $k=5$:
The exact solution corresponding to the solution of
equation~\ref{eq:nieqn} (circles), the approximate solution
of~\ref{eq:niapprox} (squares), and the sequence generated by
Kauffman's RC algorithm (triangles).  The paired increasing plots
illustrate the cumulative expected number of guesses required to find
a given set $V_i$.

\begin{center}
\begin{figure}
\includegraphics[scale=.5]{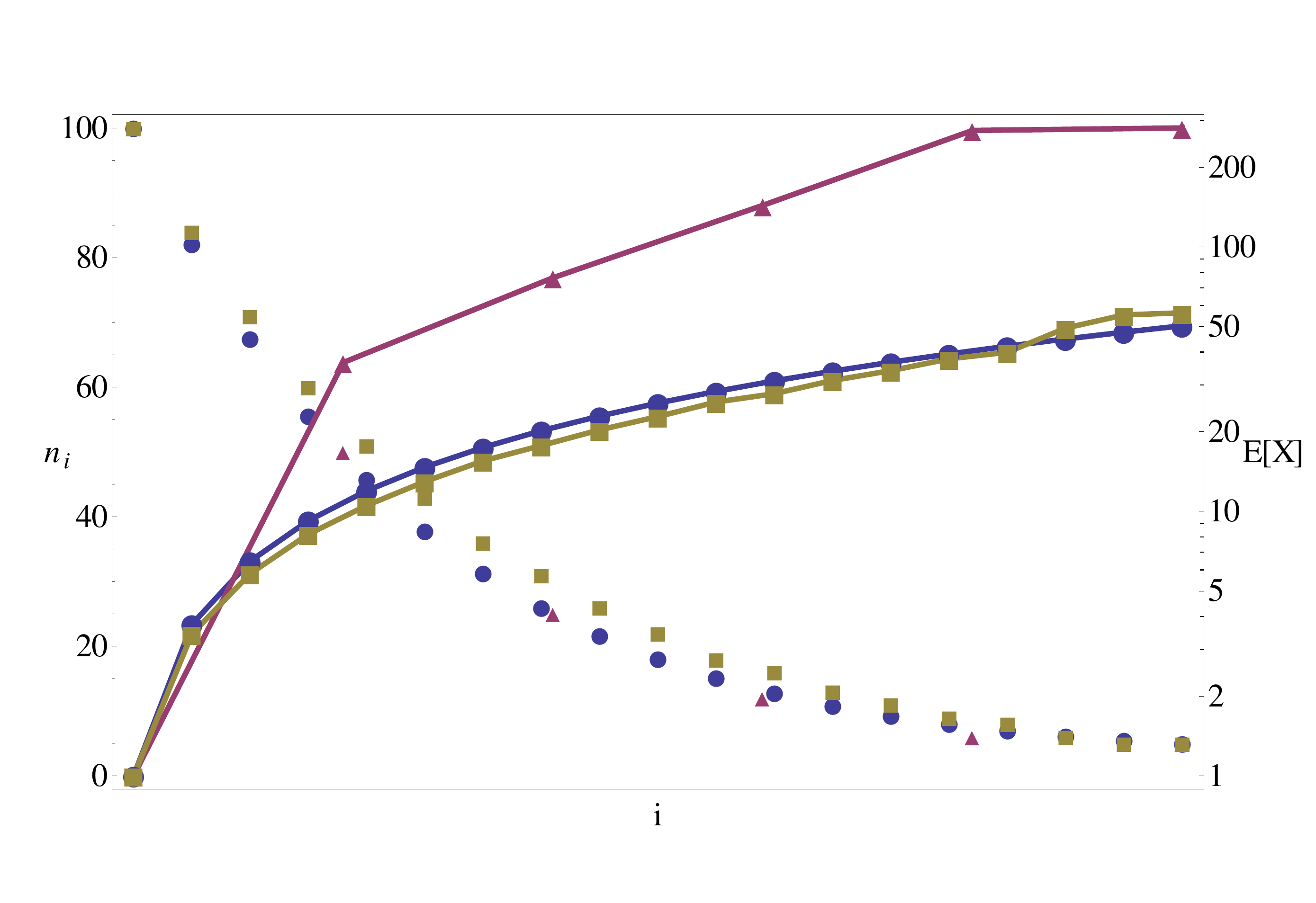} 
\caption{Comparison of various solutions to the subset-guessing
  problem: equation~(\ref{eq:nieqn}) (circles), equation~(\ref{eq:niapprox})
  (squares), and Kauffman's original RC algorithm (triangles).  The
  joined plots give the corresponding expected run times on a logarithmic $y$-scale.}
\label{fig:curves}
\end{figure}
\end{center}

\section{Calculus of Variations}
\label{sec:calcofvar}

In this section we present an alternate derivation of the
approximately optimal formula for the $n_i$ given
in~(\ref{eq:niapprox}).  In passing from the discrete to the
continuous, we can hope that an optimal solution $y(x)$ to
\begin{equation*}
  \int_0^M
  \frac{\Gamma((y(x)-1)+1)\Gamma(y(x)-k+1)}
  {\Gamma(y(x)+1)\Gamma((y(x)-1)-k+1)}\,dx
\end{equation*}
yields a good solution to the original sum.

The calculus of variations is designed for exactly this sort of
problem: It can be used to find functions $y(x)$ that are extrema of
the integral
\begin{equation*}
  \int_a^b f(y(x),y'(x); x)\,dx.
\end{equation*}
In this case we would set $n_i = y(i)$.  Unfortunately, it is not
clear that the required computation is tractable.  So we use the
aforementioned approximation
\begin{equation}\label{eq:covapprox}
  \frac{\Gamma(n_{i-1}+1)\Gamma(n_i-k+1)}{\Gamma(n_i+1)\Gamma(n_{i-1}-k+1)} 
  \approx \left(\frac{n_{i-1}}{n_i}\right)^k.
\end{equation}
In order to write the righthand side of~(\ref{eq:covapprox}) in
the form $f(y(x),y'(x); x)$, we use the approximation $y'(i) \approx
y(i)-y(i-1)$.  Simple algebra then shows that $y(i-1)/y(i) \approx 1 -
y'(i)/y(i)$.  Hence we can set $f(y(x),y'(x);x) =
\left(1-y'(x)/y(x)\right)^k$.  Euler's equation then tells us that we
should look for solutions to the differential equation
\begin{equation*}
  \frac{\partial f}{\partial y} = \frac{d}{dx}\frac{\partial f}{\partial y'}.
\end{equation*}
Straightforward computations yield
\begin{align*}
  \frac{\partial f}{\partial y} &= k\left(1-\frac{y'}{y}\right)^{k-1}\frac{y'}{y^2}\\
  \frac{\partial f}{\partial y'} &=
  k\left(1-\frac{y'}{y}\right)^{k-1}\left(\frac{-1}{y}\right).\\
  \frac{d}{dx}\frac{\partial f}{\partial y'} &=
  k(k-1)\left(1-\frac{y'}{y}\right)^{k-2} \frac{d}{dx}\left[1-\frac{y'}{y}\right]\left(\frac{-1}{y}\right) +
  k\left(1-\frac{y'}{y}\right)^{k-1}\frac{y'}{y^2}.
\end{align*}
One family of solutions will be those satisfying $y'=y$, i.e., $y(x) =
Ce^x$.  However, these are increasing functions so we ignore them.
Other solutions are those satisfying
\begin{equation*}
k\left(1-\frac{y'}{y}\right)\frac{y'}{y^2} =
k(k-1) \frac{d}{dx}\left[1-\frac{y'}{y}\right]\left(\frac{-1}{y}\right) +
  k\left(1-\frac{y'}{y}\right)\frac{y'}{y^2}.
\end{equation*}
This is satisfied by those $y$ for which $\frac{d}{dx}(1-y'/y)=0$ or,
equivalently, those for which $y'/y$ is a constant.  This implies
$y(x) = C_1e^{C_2 x}$.\\

The constraints $y(0) = n_0$ and $y(M) = n_M = k$ imply 
that our approximate solution is
\begin{equation*}
  y(x) = n_0e^{\frac{1}{M}\ln\left(\frac{k}{n_0}\right)x} =
  n_0\left(\frac{k}{n_0}\right)^{x/M} = n_0^{1-x/M}k^{x/M}.
\end{equation*}
By setting $x = i \in \{0,1,\ldots,M\}$, it follows that we can set
$\displaystyle{n_i = k^{i/M}n_0^{1-i/M}}$, as seen in
equation~(\ref{eq:niapprox}).

\section{Acknowledgments}

We wish to thank Jeff Dinitz, Margaret J. Eppstein, Stuart Kauffman
and Mark Wagy for helpful discussions leading to this paper.

\bibliographystyle{plain}
\bibliography{set_search}

\end{document}